\def\NZQ{\mathbb}               
\def\ZZ{{\NZQ Z}}
\def\RR{{\NZQ R}}
\def\frk{\mathfrak}               
\def\Phi{{\frk N}}
\def\ab{{\mathbf a}}
\def\eb{{\mathbf e}}
\def\tb{{\mathbf t}}
\def\wb{{\mathbf w}}
\def\xb{{\mathbf x}}
\def\opn#1#2{\def#1{\operatorname{#2}}} 
\opn\gr{gr}
\newtheorem{Theorem}{Theorem}[section]
\newtheorem{Lemma}[Theorem]{Lemma}
\newtheorem{Corollary}[Theorem]{Corollary}
\newtheorem{Proposition}[Theorem]{Proposition}
\theoremstyle{definition}
\newtheorem{Remark}[Theorem]{Remark}
\newtheorem{Example}[Theorem]{Example}
\newtheorem{Conjecture}[Theorem]{Conjecture}
\let\epsilon\varepsilon
\let\phi=\varphi
\let\kappa=\varkappa
\opn\dis{dis}
\opn\height{height}
\opn\dist{dist}
\def\pnt{{\raise0.5mm\hbox{\large\bf.}}}
\opn\Lex{Lex}
\opn\conv{conv}
\title{Kempe equivalence and quadratic toric rings}
\author{Hidefumi Ohsugi and Akiyoshi Tsuchiya}
\address{Hidefumi Ohsugi,
	Department of Mathematical Sciences,
	School of Science,
	Kwansei Gakuin University,
	Sanda, Hyogo 669-1330, Japan} 
\email{ohsugi@kwansei.ac.jp}
\address{Akiyoshi Tsuchiya,
Department of Information Science,
Faculty of Science,
Toho University,
2-2-1 Miyama, Funabashi, Chiba 274-8510, Japan} 
\email{akiyoshi@is.sci.toho-u.ac.jp}
\dedicatory{Dedicated to the memory of J\"urgen Herzog}
\keywords{Kempe equivalence, stable set ideal, toric ideal, quadratic generated, perfect graph, perfectly contractile graph}
\subjclass[2020]{13P10, 05C17}
\begin{document}

\begin{abstract}
Kempe equivalence is a classical and fundamental notion in graph coloring theory.
In the present paper we establish a connection between Kempe equivalence and quadratic stable set ring, which are toric rings associated to graphs.
In fact, we characterize when the stable set ring of a graph is quadratic by using Kempe equivalence.
As an application, we relate our theorem to the theory of perfectly contractile graphs, a hereditary subclass of perfect graphs introduced by Bertschi.
In particular, our characterization implies that the conjecture of Everett and Reed on perfectly contractile graphs entails the conjecture of the authors and Shibata on quadratic stable set rings.
Furthermore, we show that the stable set rings of several important subclasses of perfectly contractile graphs including weakly chordal graphs are quadratic.
Finally, we propose a new combinatorial conjecture characterizing perfectly contractile graphs purely in terms of Kempe equivalence on replication graphs.
\end{abstract}

\maketitle

\section{Introduction}
Let $G$ be a graph on the vertex set $[n]:=\{1,2,\ldots,n\}$ with the edge set $E(G)$.
A {\em $k$-coloring} $f$ of $G$ is a map $f: [n] \to [k]$ such that
$f(i) \neq f(j)$ for all $\{i,j\} \in E(G)$.
The smallest integer $\chi(G)$ for which $G$ admits a $\chi(G)$-coloring is called the \textit{chromatic number} of $G$.
Given a $k$-coloring $f$ of $G$, and integers $1 \le i < j \le k$,
let $H$ be a connected component of the subgraph of $G$ induced 
on the vertex set $f^{-1}(i) \cup f^{-1}(j)$.
Then we can obtain a new $k$-coloring $g$ of $G$ by setting
$$
g(x) =
\begin{cases}
    f(x) & x \notin H,\\
    i & x \in H \mbox{ and } f(x) =j,\\
    j & x \in H \mbox{ and } f(x) =i.\\
\end{cases}
$$
We say that $g$ is obtained from $f$ by a {\em Kempe switching}.
Two $k$-colorings $f$ and $g$ of $G$ are said to be {\em Kempe equivalent} if there exists
a sequence $f_0,f_1,\ldots,f_s$ of $k$-colorings of $G$ such that $f_0=f$, $f_s=g$,
and $f_i$ is obtained from $f_{i-1}$ by a Kempe switching.
For instance, any two $k$-colorings of a bipartite graph are Kempe equivalent.
Kempe switchings were introduced by Kempe in the false proof of the Four Color Theorem, but the idea turned out to be powerful in graph coloring theory. Since then, many researchers have studied Kempe switchings and Kempe equivalence.
See \cite{Moh} for an overview of Kempe equivalence.

The purpose of the present paper is to establish a connection between Kempe equivalence and the quadraticity of the \emph{stable set ring} $K[G]$ 
of $G$.
Given a subset $S \subset [n]$,
let $G[S]$ denote the induced subgraph of $G$ on the vertex set $S$.
A subset $S \subset [n]$ is called a {\em stable set} (or an {\em independent set}) of $G$
if $\{i,j\} \notin E(G)$ for all $i,j \in S$ with $i \neq j$.
Namely, a subset $S \subset [n]$ is stable if and only if $G[S]$ is an empty graph (i.e., a graph with no edges).
In particular, the empty set $\emptyset$ and any singleton $\{i\}$ with $i \in [n]$
are stable.
Denote $S(G)=\{S_1,\ldots,S_m\}$ the set of all stable sets of $G$.
Given a subset $S \subset [n]$, we associate the $(0,1)$-vector $\rho(S)=\sum_{j \in S} \eb_j$. Here $\eb_j$ is the $j$th unit coordinate vector in $\RR^n$. For example, $\rho(\emptyset)=(0,\ldots,0) \in \RR^n$.
Let $K[\tb,s]:=K[t_1,\ldots,t_n,s]$ be the polynomial ring in $n+1$ variables  over a field $K$.
Given a nonnegative integer vector 
$\ab=(a_1,\ldots,a_n) \in  \ZZ_{\geq 0}^n$, we write 
$\tb^{\ab}:=t_1^{a_1} t_2^{a_2}\cdots t_n^{a_n} \in K[\tb,s]$.
The \textit{stable set ring} of $G$ is 
\[
K[G]:=K[\tb^{\rho(S_1)} s,\ldots, \tb^{\rho(S_m)} s] \subset K[\tb,s]. 
\]
We regard $K[G]$ as a homogeneous algebra by setting each $\deg (\tb^{\rho(S_i)} s)=1$.
Note that $K[G]$ is a toric ring.
Let $K[\xb]:=K[x_1,\ldots,x_m]$ denote the polynomial ring in $m$ variables over  $K$ with each $\deg(x_i)=1$.
The \textit{stable set ideal}
$I_G$ of $G$ is the kernel of the surjective homomorphism 
$\pi:K[\xb] \to K[G]$ defined by $\pi(x_i)=\tb^{\rho(S_i)}s$ for $1 \leq i \leq m$.
Note that $I_G$ is a toric ideal, and generated by homogeneous binomials.
The toric ring $K[G]$ is called \textit{quadratic}
if $I_G$ is generated by quadratic binomials.
We say that ``$I_G$ is generated by quadratic binomials'' even if $I_G =\{0\}$ (or equivalently, $G$ is complete).
It is easy to see that a homogeneous binomial
$x_{i_1} \cdots x_{i_r} - x_{j_1} \cdots x_{j_r} \in K[\xb]$ belongs to $I_G$
if and only if 
$\bigcup_{\ell=1}^r S_{i_\ell}=\bigcup_{\ell=1}^r S_{j_\ell}$ 
as multisets.
See, e.g., \cite{HHO} for details on toric rings and toric ideals.
The following is a connection between Kempe equivalence and the quadraticity of the stable set ring $K[G]$.

\begin{Theorem}
\label{quadgene}
    Let $G$ be a graph on the vertex set $[n]$.
    Then $K[G]$ is quadratic if and only if, for every replication graph $H$ of any induced subgraph of $G$ and every $k \ge \chi(H)$,
    all $k$-colorings of $H$ are Kempe equivalent (see Section~\ref{sect:kempe} for the definition of replication graphs).
\end{Theorem}

Kempe equivalence has also played a key role in the study of \emph{perfectly contractile graphs}, a hereditary subclass of perfect graphs introduced by Bertschi~\cite{Bert}.
In particular, Bertschi~\cite{Bert} proved that if a graph $G$ is perfectly contractile, 
then for every $k \geq \chi(G)$, all $k$-colorings of $G$ are Kempe equivalent. 
By combining this observation with Theorem \ref{quadgene}, we can derive applications to a conjecture for quadratic stable set rings.

A graph $G$ is called \textit{perfect} if every induced subgraph $H$ of $G$ satisfies $\chi(H)=\omega(H)$, where $\omega(H)$ is the maximum cardinality of a clique in $H$. Perfect graphs were introduced by Berge \cite{Berge}.
A \textit{hole} is an induced cycle of length $\geq 5$ and an \textit{antihole} is the complement of a hole.
A hole is called \textit{odd},
if its length is odd.
An \textit{odd antihole} is the complement of 
an odd hole.
The celebrated Strong Perfect Graph Theorem due to 
Chudnovsky, Robertson, Seymour, and Thomas~\cite{CRST} asserts that 
a graph is perfect if and only if it contains no odd holes and no odd antiholes.

Bertschi~\cite{Bert} further defined a hereditary subclass of perfect graphs.
An \textit{even pair} in a graph $G$ is a pair of non-adjacent vertices of $G$ such that the length of all induced paths between them is even.
Contracting a pair of vertices $\{x,y\}$ in a graph $G$ means removing $x$ and $y$ and adding a new vertex $z$ with edges to every neighborhood of $x$ or $y$.
A graph $G$ is called \textit{even-contractile} if there exists a sequence $G_0,\ldots,G_k$ of graphs satisfying the following:
\begin{enumerate}[(i)]
    \item $G=G_0$;
    \item each $G_i$ is obtained from $G_{i-1}$ by contracting an even pair of $G_{i-1}$;
    \item $G_k$ is a complete graph.
\end{enumerate}
A graph $G$ is called \textit{perfectly contractile} if every induced subgraph of $G$ is even-contractile.
Every perfectly contractile graph is perfect, and, in particular,
Meyniel graphs, perfectly orderable graphs, and clique-separable graphs 
are perfectly contractile~\cite{Bert}.
Moreover, every weakly chordal graph is perfectly contractile~\cite{HHM}.
Everett and Reed 
conjectured a characterization of perfectly contractible graphs
via a list of forbidden induced subgraphs.
An \textit{odd} (resp.~\textit{even}) \textit{prism} is a graph consisting of two disjoint triangles with three disjoint induced paths of odd (resp.~even) length between them.

\begin{Conjecture}[\cite{ER}]
\label{conj:first}
A graph is perfectly contractile if and only if it contains no odd holes, no antiholes and no odd prisms as induced subgraphs.
\end{Conjecture}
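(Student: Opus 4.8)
The plan is to prove the two implications separately, exploiting the fact that the class $\Ac$ of graphs containing no odd hole, no antihole and no odd prism is hereditary, since it is defined by forbidden induced subgraphs. Because ``perfectly contractile'' is itself a hereditary property, both sides of the equivalence can then be matched up class-wise, and the real content reduces to comparing membership in $\Ac$ with even-contractibility.

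For the necessity direction, suppose $G$ is perfectly contractile and, towards a contradiction, that $G$ contains one of the three forbidden configurations $H$ as an induced subgraph. Since perfect contractility passes to induced subgraphs, $H$ must itself be even-contractile. I would then show that each forbidden $H$ is a non-complete graph possessing \emph{no} even pair at all: for an odd hole $C_{2k+1}$, the two arcs between any pair of non-adjacent vertices have lengths summing to the odd number $2k+1$, so one is an even and one an odd induced path; the analogous parity computation disposes of antiholes, and for an odd prism one checks the three rungs directly. A non-complete graph with no even pair cannot even begin a contraction sequence, so $H$ is not even-contractile, a contradiction. This forces $G\in\Ac$.

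The sufficiency direction is where the real work lies, and I would set it up as an induction on $|V(G)|$. The inductive step reduces to the following \textbf{Even Pair Lemma}: every non-complete graph $G\in\Ac$ admits an even pair $\{x,y\}$ whose contraction $G/\{x,y\}$ again lies in $\Ac$. Granting this, one contracts to obtain a strictly smaller member of $\Ac$, applies the induction hypothesis to get an even-contractile sequence down to a clique, and prepends the single contraction; the base case of a complete graph is immediate, and since contraction decreases the vertex count the induction terminates.

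The Even Pair Lemma is the main obstacle, and I expect it to be genuinely hard rather than routine. Two difficulties compound here: first, merely \emph{locating} an even pair in an arbitrary non-complete Artemis graph is nontrivial, since the odd-hole example shows that perfection alone does not guarantee one; second, the contraction must be shown to introduce no new odd hole, antihole or odd prism, that is, to remain inside $\Ac$. I would attack this through a structural decomposition of $\Ac$: argue that every such graph is either basic (a clique, or otherwise visibly carrying a suitable even pair) or decomposes along a clique cutset, a $2$-join, or an amalgam, and then propagate the even pair through the decomposition while tracking parities. This is precisely the strategy used for structural characterizations in the spirit of the strong perfect graph theorem, and I anticipate that the bulk of the proof, and the true depth of the conjecture, is concentrated in making such a decomposition theorem work and in the parity bookkeeping that certifies $G/\{x,y\}\in\Ac$.
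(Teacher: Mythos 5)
This statement is not proved in the paper, and cannot be checked against a paper proof: it is the Everett--Reed conjecture, which the paper explicitly records as open (only the necessity of the forbidden-subgraph condition and some special cases --- planar, dart-free, even prism-free graphs --- are known). The paper's own contribution is merely to show that \emph{if} this conjecture holds then Conjecture~\ref{conj:second} follows, via Theorem~\ref{quadgene}. So any complete ``proof'' of the statement would be a major new result, and yours is not one.

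Concretely, your necessity direction is sound and is essentially the known half: odd holes, antiholes and odd prisms have no even pairs, and perfect contractility is hereditary, so none of them can occur as an induced subgraph of a perfectly contractile graph. But the sufficiency direction rests entirely on your ``Even Pair Lemma'' --- that every non-complete graph with no odd hole, no antihole and no odd prism has an even pair whose contraction stays in the class --- and that lemma \emph{is} the open conjecture, not a reduction of it. You acknowledge it is hard, but the proposed attack (decomposition into basic classes plus clique cutsets, $2$-joins and amalgams, with parity bookkeeping) is exactly the program that has so far only succeeded for the restricted classes above; even the bare existence of an even pair in an arbitrary non-complete member of the class is not known, let alone one whose contraction avoids creating a new odd hole, antihole or odd prism. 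As it stands the argument has an unfilled gap equal in depth to the statement itself, so it should be presented as a proof strategy for an open problem, not as a proof.
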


In \cite{LMR}, it was shown that
any perfectly contractile graph
contains no odd holes, no antiholes and no odd prisms as induced subgraphs.
Moreover, Conjecture~\ref{conj:first} holds 
for planar graphs \cite{LMR}, 
dart-free graphs \cite{dartfree}, and even prism-free graphs \cite{prismfree}.
However, Conjecture~\ref{conj:first} is still open.

The authors and Shibata~\cite{OST} proposed an algebraic analogue of Conjecture~\ref{conj:first}
in terms of the stable set ring.

\begin{Conjecture}[{\cite[Conjecture 0.2]{OST}}]
\label{conj:second} Let $G$ be a perfect graph. 
Then the stable set ring $K[G]$ is quadratic
if and only if $G$ contains no odd holes, no antiholes and no odd prisms as induced subgraphs.

\end{Conjecture}

In~\cite{OST}, it was shown that a graph
$G$ contains no odd holes, no antiholes and no odd prisms as induced subgraphs if $K[G]$ is quadratic.
Moreover, it was shown that $K[G]$ is quadratic when $G$ is a Meyniel graph, 
a perfectly orderable graph, or a clique-separable graph.

As an application of Theorem~\ref{quadgene}, 
we show that Conjecture~\ref{conj:first} implies Conjecture~\ref{conj:second}.

\begin{Theorem}
\label{thm:conj}
If Conjecture~\ref{conj:first} is true, so is Conjecture~\ref{conj:second}.
\end{Theorem}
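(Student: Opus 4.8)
The plan is to reduce everything to the single implication $(2)\Rightarrow(3)$ of Conjecture~\ref{conj:second} and then feed it into Theorem~\ref{quadgene}. Two of the needed links are already available: the implication $(3)\Rightarrow(2)$ is proved in \cite{OST}, and, granting Conjecture~\ref{conj:first}, the equivalence $(1)\Leftrightarrow(2)$ is immediate, since Conjecture~\ref{conj:first} asserts precisely that perfect contractility is equivalent to the exclusion of odd holes, antiholes and odd prisms. Thus, once I establish $(2)\Rightarrow(3)$, the cycle $(1)\Leftrightarrow(2)$, $(2)\Rightarrow(3)$, $(3)\Rightarrow(2)$ closes and all three conditions become equivalent for every perfect graph $G$. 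Note that a graph satisfying $(2)$ has no odd holes and no odd antiholes, hence is automatically perfect by the strong perfect graph theorem, so no perfectness hypothesis is lost in treating $(2)\Rightarrow(3)$ on its own.

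To prove $(2)\Rightarrow(3)$, suppose $G$ contains no odd hole, no antihole and no odd prism, and apply Theorem~\ref{quadgene}: it suffices to show that for every replication graph $H$ of an arbitrary induced subgraph $G'$ of $G$ and every $k\ge\chi(H)$, all $k$-colorings of $H$ are Kempe equivalent. I would obtain this through the following chain. First, the property of having no odd hole, no antihole and no odd prism is hereditary, so $G'$ inherits it. Second---and this is the crux---this same property is preserved under replication, so $H$ also has no odd hole, no antihole and no odd prism. Granting Conjecture~\ref{conj:first}, $H$ is then perfectly contractile, and since all $k$-colorings of a perfectly contractile graph are Kempe equivalent, the hypothesis of Theorem~\ref{quadgene} is met and $K[G]$ is quadratic.

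The main obstacle is therefore the replication-closure step: if $G'$ has no odd hole, no antihole and no odd prism, the same must hold for any replication graph $H$ of $G'$. Since a replication graph is obtained by iterating single replications, it suffices to add one twin pair at a time. I would argue contrapositively, analyzing how a replicated pair $v,v'$ (with the same neighbors in the rest of the graph, and non-adjacent) can sit inside a forbidden configuration. If such a configuration uses at most one vertex from each twin class, it is, up to isomorphism, an induced subgraph of $G'$ itself, a contradiction; so I only need to rule out two twins appearing together. For an induced odd cycle this is impossible: two non-adjacent vertices with identical neighborhoods lying on an induced cycle are forced to share both their cycle-neighbors and thus to form a $4$-cycle, which cannot occur inside an induced cycle of length $\ge 5$. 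Antiholes I would handle by complementation---passing to $\overline{H}$ turns these false twins into true twins and ``no antihole'' into ``no hole''---and two true twins cannot lie on a common induced cycle of length $\ge 4$, since the edge between them would be a chord.

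The odd-prism case is analogous but more delicate, and I expect it to require the most care: a prism consists of two disjoint triangles joined by three induced paths, so one must simultaneously track the triangles and all three connecting paths, and verify that a genuine pair of twins cannot be placed without creating a chord or collapsing the structure to one already present in $G'$. The hole and antihole cases should be short. Once the replication-closure lemma is in hand, the assembly is routine: heredity, the lemma, Conjecture~\ref{conj:first}, and the known Kempe equivalence of colorings of perfectly contractile graphs combine, via Theorem~\ref{quadgene}, to yield $(2)\Rightarrow(3)$ and hence the full equivalence asserted in Conjecture~\ref{conj:second}.
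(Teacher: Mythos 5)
Your overall architecture is exactly the paper's: reduce to $(2)\Rightarrow(3)$ (using that $(3)\Rightarrow(2)$ is known and that Conjecture~\ref{conj:first} gives $(1)\Leftrightarrow(2)$), then verify the hypothesis of Theorem~\ref{quadgene} by showing that the class of graphs with no odd holes, no antiholes and no odd prisms is closed under taking induced subgraphs and replication, and finally invoke perfect contractility together with Bertschi's result that all $k$-colorings of an even-contractile graph are Kempe equivalent. However, your treatment of the crux step --- replication-closure --- rests on a misreading of the definition. In this paper $G_\ab$ is obtained by replacing each vertex $i$ with a \emph{complete} graph $G^{(i)}$ on $a_i$ vertices, so two vertices of the same class $G^{(i)}$ are \emph{adjacent} twins ($x\sim y$ and $N(x)\setminus\{y\}=N(y)\setminus\{x\}$), not the non-adjacent (false) twins you analyze. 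Your case analysis for holes (``two non-adjacent vertices with identical neighborhoods \dots form a $4$-cycle'') and your complementation trick for antiholes are therefore aimed at the wrong operation; as written they prove closure under blowing vertices up into independent sets, which is not what Theorem~\ref{quadgene} asks for. In addition, you explicitly leave the odd-prism case open (``I expect it to require the most care''), so even within your own framework the lemma is not established.

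The correct argument is both uniform and shorter, and it is what the paper does: if an induced subgraph $F$ of $G_\ab$ contains two vertices of the same clique $G^{(i)}$, then $F$ contains an adjacent twin pair (the property of being adjacent twins passes to induced subgraphs containing both vertices); since odd holes, antiholes and odd prisms have no adjacent twins, every induced copy of such a forbidden graph in $G_\ab$ meets each $G^{(i)}$ in at most one vertex and is therefore isomorphic to an induced subgraph of $G$. Hence $G_\ab$ contains a forbidden graph if and only if $G$ does, which is the closure you need. If you replace your false-twin analysis by this adjacent-twin observation (and actually verify that prisms have no adjacent twins, which is a short check: two vertices in the same triangle have distinct path-neighbors, and two consecutive path vertices have distinct neighbors along the path or in the triangles), your proof coincides with the paper's.
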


Furthermore, Theorem~\ref{quadgene} yields the quadraticity of $K[G]$
for several important subclasses of perfectly contractile graphs.

\begin{Theorem}
\label{thm:app}
For any graph $G$ belonging to one of the following subclasses of 
perfectly contractile graphs,
$K[G]$ is quadratic.
\begin{itemize}
    \item[(a)] 
dart-free graphs with no odd holes, no antiholes, and no odd prisms{\rm ;}
    \item[(b)] 
even prism-free graphs with no odd holes, no antiholes, and no odd prisms{\rm ;}
    \item[(c)]
weakly chordal graphs{\rm ;}
    \item[(d)]
Meyniel graphs \cite{OST}{\rm ;}
    \item[(e)]
    perfectly orderable graphs \cite{OST}.
\end{itemize}
\end{Theorem}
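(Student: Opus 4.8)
The plan is to run everything through Theorem~\ref{quadgene}, using the fact recalled in the introduction that all $k$-colorings of a perfectly contractile graph are Kempe equivalent (for every $k$ at least its chromatic number). By Theorem~\ref{quadgene}, to show that $K[G]$ is quadratic it is enough to verify that every replication graph $H$ of every induced subgraph of $G$ is perfectly contractile: then all $k$-colorings of $H$ with $k \ge \chi(H)$ are Kempe equivalent, which is exactly the condition in Theorem~\ref{quadgene}. So the statement reduces, class by class, to the assertion that the class is hereditary, closed under replication, and consists of perfectly contractile graphs.

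That the members are perfectly contractile I would take from the literature. Weakly chordal graphs are perfectly contractile \cite{HHM}, giving (c). Conjecture~\ref{conj:first} is a theorem for dart-free graphs \cite{dartfree} and for even-prism-free graphs \cite{prismfree}; hence a graph in class (a) or (b), having by hypothesis no odd hole, no antihole and no odd prism, is perfectly contractile. Parts (d) and (e) are the results of \cite{OST}. Heredity is immediate, since dart-freeness, even-prism-freeness, weak chordality, and the exclusion of odd holes, antiholes and odd prisms all pass to induced subgraphs.

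The crux --- and the step on which I expect to spend the real work --- is closure under replication. Let $G \ast v$ denote the graph obtained by adjoining a true twin $v'$ of $v$, adjacent to $v$ and to every neighbor of $v$; a replication graph is built by iterating such steps. I would first prove an elementary closure lemma: if a graph $F$ has no two true twins, then the class of $F$-free graphs is closed under $\ast$. Indeed, an induced copy of $F$ in $G \ast v$ that omits $v$ or $v'$ already sits in $G$ (after identifying $v'$ with $v$, whose non-twin adjacencies agree), while a copy using both $v$ and $v'$ would exhibit them as true twins inside $F$, which is excluded. Everything then comes down to a finite combinatorial check that none of the obstructions has two true twins: on a hole or antihole of length $\ge 5$ adjacent vertices have distinct closed neighborhoods (directly, and in the complement), the vertices of an odd or even prism are pairwise distinguished by how they attach to the two triangles, and the dart --- though it contains a pair of \emph{false} twins --- contains no pair of \emph{true} twins. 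Consequently each defining property above is preserved by $\ast$, and hence so is membership in each of the classes (a)--(e).

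Putting the pieces together, any replication graph of any induced subgraph of a graph in one of these classes again belongs to that class, so it is perfectly contractile, so its colorings (with at least $\chi$ colors) are all Kempe equivalent, and Theorem~\ref{quadgene} yields that $K[G]$ is quadratic. The one delicate point I anticipate is reconciling the paper's precise definition of replication graph (Section~\ref{sect:kempe}) with true-twin duplication: the dart-free case (a) genuinely requires true twins, since the dart does admit false twins, so the closure lemma must be invoked in the correct twin regime.
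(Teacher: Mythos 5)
Your overall strategy is the same as the paper's: reduce via Theorem~\ref{quadgene} (the paper packages this as Corollary~\ref{Ga contractile}) to showing that each class is hereditary and closed under replication, note that replication is iterated duplication by adjacent (true) twins, and check that the forbidden induced subgraphs admit no such twin pair. For classes (a), (b), (c) this is exactly the paper's argument and your twin-free checks for darts, holes, antiholes and prisms are correct.

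There is, however, a genuine gap at class (e), and a smaller one at (d). Perfectly orderable graphs are defined by the \emph{existence} of a perfect ordering, not by a known list of forbidden induced subgraphs; your closure lemma applies to $F$-free classes, and you give no argument that the (unknown, infinite) family of minimal obstructions to perfect orderability is free of true twins. So the sentence ``hence so is membership in each of the classes (a)--(e)'' does not follow from your finite check. The paper closes this by a different device: Chv\'atal's theorem \cite{perfectlyorderable} characterizes when a \emph{fixed ordering} is perfect (no induced $P_4$ $abcd$ with $a<b$ and $d<c$); one then lifts a perfect ordering of $G$ to an ordering of $G_\ab$ that is consistent within each clique $G^{(i)}$, and uses that $P_4$ has no adjacent twins to verify the lifted ordering is again perfect. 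You need this (or a full deferral of (e) to \cite{OST}, which the theorem's own citation permits, but then you should not claim the twin argument covers (e)). For (d), Meyniel graphs \emph{are} characterized by forbidden induced subgraphs, namely odd holes together with odd cycles of length $\ge 5$ having exactly one chord, but you never list the latter obstruction or check it is twin-free (it is, so this is an omission rather than an error). Finally, a bookkeeping point: the perfect contractibility of Meyniel and perfectly orderable graphs is due to Bertschi \cite{Bert}, not \cite{OST}; \cite{OST} is the source for the quadraticity statements in (d) and (e).
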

Finally, combining Conjecture \ref{conj:second} with Theorem \ref{quadgene}, we propose a new purely combinatorial conjecture characterizing perfectly contractile graphs.

\begin{Conjecture}
\label{conj:comb}
Let $G$ be a perfect graph. Then $G$ is perfectly contractile if and only if for every replication graph $H$ of an arbitrary induced subgraph of $G$ and every $k \geq \chi(H)$, all $k$-colorings of $H$ are Kempe equivalent.
\end{Conjecture}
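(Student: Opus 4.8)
The plan is to use Theorem~\ref{quadgene} to convert the Kempe-equivalence hypothesis into an algebraic one, thereby reducing the conjecture to the already-formulated Conjecture~\ref{conj:second}. Indeed, by Theorem~\ref{quadgene} the condition ``for every replication graph $H$ of an arbitrary induced subgraph of $G$ and every $k\ge\chi(H)$, all $k$-colorings of $H$ are Kempe equivalent'' is \emph{exactly} the statement that $K[G]$ is quadratic. Hence, for a perfect graph $G$, Conjecture~\ref{conj:comb} is equivalent to the assertion that $G$ is perfectly contractile if and only if $K[G]$ is quadratic, i.e.\ to the equivalence $(1)\Leftrightarrow(3)$ of Conjecture~\ref{conj:second}. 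So the whole problem is to establish this equivalence, and I would attack the two implications separately.

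For the implication ``perfectly contractile $\Rightarrow K[G]$ quadratic'', the natural route is through Corollary~\ref{Ga contractile}: it suffices to show that the class of perfectly contractile graphs is closed under replication, i.e.\ that $G_\ab$ is perfectly contractile whenever $G$ is and $\ab\in\ZZ_{\ge 0}^n$. Given such closure, Proposition~\ref{ecKEMPE} makes every $k$-coloring of each $G_\ab$ Kempe equivalent, and Theorem~\ref{quadgene} yields quadraticity at once. The cleanest way I see to obtain the closure is to invoke the forbidden-subgraph description: as in the proof of Theorem~\ref{thm:conj}, any two distinct vertices of a clique $G^{(i)}$ form an adjacent twin, and odd holes, antiholes, and odd prisms contain no adjacent twins, so $G_\ab$ acquires such an induced subgraph precisely when $G$ already does. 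Thus replication preserves the class $\mathcal{A}$ of graphs avoiding these three families.

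For the reverse implication ``$K[G]$ quadratic $\Rightarrow$ perfectly contractile'', I would chain the known implication $(3)\Rightarrow(2)$ of \cite{OST} (quadraticity forces the absence of odd holes, antiholes, and odd prisms) with the sufficiency direction $(2)\Rightarrow(1)$, namely that a perfect graph avoiding these three families is perfectly contractile. This last step is precisely the forbidden-subgraph characterization of perfectly contractile graphs.

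The main obstacle is now visible: both implications, once the replication bookkeeping is done, rest on Everett and Reed's Conjecture~\ref{conj:first}---the sufficiency direction for one implication, and (through the class $\mathcal{A}$) the replication-closure of perfectly contractile graphs for the other. Since Conjecture~\ref{conj:first} is still open, an unconditional proof of Conjecture~\ref{conj:comb} appears out of reach with the present tools; what the reduction does establish unconditionally is that Conjecture~\ref{conj:comb} is \emph{equivalent} to Conjecture~\ref{conj:second} and hence follows from Conjecture~\ref{conj:first}, exactly the content distilled by Theorem~\ref{thm:conj}. The only avenue I see toward genuine unconditional progress is to prove replication-closure of perfectly contractile graphs \emph{intrinsically}, arguing with even pairs and contraction sequences directly rather than through $\mathcal{A}$; the difficulty there is that contracting an even pair in $G_\ab$ need not descend to any single contraction in $G$, so one must control how even pairs lift and descend under replication.
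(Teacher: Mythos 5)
The statement you were asked to prove is a \emph{conjecture}: the paper offers no proof of it, but introduces it precisely as the translation, via Theorem~\ref{quadgene}, of the open equivalence $(1)\Leftrightarrow(3)$ in Conjecture~\ref{conj:second}, which is exactly the reduction you carry out. Your assessment --- that the Kempe condition is verbatim ``$K[G]$ is quadratic,'' that the statement is therefore equivalent to Conjecture~\ref{conj:second} and follows from Conjecture~\ref{conj:first} via Theorem~\ref{thm:conj}, but that an unconditional proof is out of reach with the paper's tools --- is correct and matches the paper's own treatment.
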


The structure of the paper is as follows:
In Section~\ref{sect:kempe}, we explore the relationship between Kempe equivalence and stable set rings, 
in particular providing a system of generators of the defining ideal of $K[G]$ in terms of colorings 
(Theorem~\ref{binomial_color}) and proving Theorem~\ref{quadgene}.
Section~\ref{sect:app} discusses applications of Theorem~\ref{quadgene} 
and contains the proofs of Theorems~\ref{thm:conj} and~\ref{thm:app}.


\section{Kempe equivalence and quadratic stable set rings}
\label{sect:kempe}
In this section, we characterize when the stable set ring $K[G]$ of a (not necessarily perfect) graph is quadratic.
In particular, we prove Theorem~\ref{quadgene}.
First, we discuss a relationship between monomials of degree $k$ in $K[\xb]$ and $k$-colorings.

Given a graph $G$ on the vertex set $[n]$, and 
$\ab = (a_1,\ldots,a_n) \in \ZZ_{\ge 0}^n$,
let $G_\ab$ be the graph obtained from $G$ by replacing each vertex $i \in [n]$ 
with a complete graph $G^{(i)}$ of $a_i$ vertices (if $a_i =0$, then just delete the vertex $i$),
and joining all vertices $x \in G^{(i)}$ and $y \in G^{(j)}$ such that $\{i,j\}$ is an edge of $G$.
In particular, if $\ab =(1,\ldots,1)$, then $G_\ab = G$.
If $\ab = {\bf 0}$, then $G_\ab$ is the null graph (a graph without vertices). 
In addition, if $\ab$ is a $(0,1)$-vector, then $G_\ab$ is an induced subgraph of $G$.
If $\ab$ is a positive vector, then $G_\ab$ is called a 
\textit{replication graph} of $G$.
In general, $G_\ab$ is a replication graph of an induced subgraph of $G$.

\begin{Lemma}
\label{lem:color-stable}
Let $G$ be a graph on the vertex set $[n]$ and 
let $f$ be a $k$-coloring of $G_{\ab}$ where 
$\ab \in \ZZ^n_{\geq 0}$.
Then for each $\ell=1,2,\ldots,k$,
 the set $
S=\{j \in [n] : G^{(j)} \cap f^{-1}(\ell) \ne \emptyset\}
$
is a stable set of $G$.
\end{Lemma}

\begin{proof}
    Suppose that $\{j_1,j_2\}$ is an edge of $G$ for some $j_1 , j_2 \in S$.
    Then $G^{(j_1)} \cap f^{-1}(\ell) \ne \emptyset$ and $G^{(j_2)} \cap f^{-1}(\ell) \ne \emptyset$.
Since $G^{(j_1)} \cap f^{-1}(\ell) \ne \emptyset$ and $G^{(j_2)} \cap f^{-1}(\ell) \ne \emptyset$ hold,
there exist $v_1 \in G^{(j_1)} \cap f^{-1}(\ell)$ and $v_2 \in G^{(j_2)} \cap f^{-1}(\ell)$.
Since $\{j_1,j_2\}$ is an edge of $G$, $\{v_1,v_2\}$ is an edge of $G_\ab$.
However, since $v_1,v_2 \in f^{-1}(\ell)$, this contradicts that $f$ is a $k$-coloring of $G_\ab$.
Thus $S$ is a stable set of $G$.
\end{proof}

Let $G$ be a graph on the vertex set $[n]$ with stable sets $S(G)=\{S_1,\ldots,S_m\}$
and let $f$ be a $k$-coloring of $G_\ab$ where 
$\ab \in \ZZ_{\ge 0}^n$.
From Lemma~\ref{lem:color-stable} we can associate $f$ with a monomial 
\begin{eqnarray}
\varphi_k (f) :=x_{i_1} \cdots x_{i_k} \in K[\xb], \label{ctom}
\end{eqnarray}
where
$S_{i_\ell} =  \{j \in [n] : G^{(j)} \cap f^{-1}(\ell) \ne \emptyset\}$
for $\ell=1,2,\ldots, k$.

\begin{Lemma}
\label{monomial_color}
Let $G$ be a graph on the vertex set $[n]$.
Then, for any positive integer $k$, 
$\varphi_k$ given in {\rm (\ref{ctom})} is a
surjective map from 
$\{ f : f \mbox{ is a } k\mbox{-coloring of } G_\ab \mbox{ for some } \ab \in \ZZ_{\ge 0}^n  \}$
to the set of all monomials of degree $k$ in $K[\xb]$.
Moreover, $\varphi_k (f) = \varphi_k (g)$
if and only if $f$ is obtained from $g$ by either exchanging colors or
exchanging the coloring of vertices in each clique $G^{(j)}$ in $G_\ab$.
\end{Lemma}

\begin{proof}
Let $M=x_{i_1} \cdots x_{i_k}$ be a monomial in $K[\xb]$.
Let $\ab=(a_1,\ldots,a_n)$ be the integer vector defined by
$a_p = |\{ \ell : p \in S_{i_\ell}\}|$.
Then $S_{i_1},\ldots, S_{i_k}$ defines
a $k$-coloring $f$ of $G_\ab$ as follows:
if $S_{i_\ell}$ contains $j$, then let $f(x) = \ell$ for one of $x \in G^{(j)}$
which is not colored yet.
Then $\varphi_k(f) = M$.
Thus $\varphi_k$ is surjective.
This coloring is unique up to the choices of a vertex from $G^{(j)}$.    
\end{proof}

Since a homogeneous binomial $x_{i_1} \cdots x_{i_k} - x_{j_1} \cdots x_{j_k} \in K[\xb]$ belongs to $I_G$ if and only if 
$\bigcup_{\ell=1}^k S_{i_\ell}=\bigcup_{\ell=1}^k S_{j_\ell}$ 
as multisets (i.e., $|\{ \ell : p \in S_{i_\ell}\}|=|\{ \ell : p \in S_{j_\ell}\}|$ for any $1 \le p \le n$), we can give a system of generators of $I_G$ as follows.

\begin{Theorem}
\label{binomial_color}
Let $F =x_{i_1} \cdots x_{i_k} - x_{j_1} \cdots x_{j_k} \in K[\xb]$ be a homogeneous binomial.
Then $F \in I_G$ if and only if 
there exists $\ab \in \ZZ_{\ge 0}^n$ such that
$\varphi_k(f) = x_{i_1} \cdots x_{i_k}$
and 
$\varphi_k(g) =x_{j_1} \cdots x_{j_k}$ 
for some 
$k$-colorings $f$ and $g$ of $G_\ab$.
In particular, one has 
\[
I_G= \langle \phi_k(f)- \phi_k(g) : \mbox{$f$ and $g$ are k-colorings of $G_\ab$ with $\ab \in \ZZ^d_{\geq 0}$ and $k \geq \chi(G_{\ab})$}  \rangle.
\]
\end{Theorem}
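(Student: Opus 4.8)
The plan is to prove the stated equivalence first and then deduce the generating set as an immediate consequence. The whole argument rests on a single observation about how a proper coloring behaves on the cliques $G^{(p)}$ of $G_\ab$. Recall from the discussion preceding the theorem that $F = x_{i_1}\cdots x_{i_k} - x_{j_1}\cdots x_{j_k}$ lies in $I_G$ exactly when $|\{\ell : p \in S_{i_\ell}\}| = |\{\ell : p \in S_{j_\ell}\}|$ for every $1 \le p \le n$. So I only need to translate each side of this numerical condition into the language of colorings.

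The key step I would isolate is this: for any $\ab \in \ZZ_{\ge 0}^n$ and any $k$-coloring $f$ of $G_\ab$ with $\varphi_k(f) = x_{i_1}\cdots x_{i_k}$, one has $|\{\ell : p \in S_{i_\ell}\}| = a_p$ for every $p$. Indeed, by the definition of $S_{i_\ell}$, the condition $p \in S_{i_\ell}$ means precisely that color $\ell$ occurs on some vertex of $G^{(p)}$, so $|\{\ell : p \in S_{i_\ell}\}|$ is just the number of distinct colors appearing on $G^{(p)}$; since $G^{(p)}$ is a complete graph on $a_p$ vertices and $f$ is proper, these $a_p$ vertices receive $a_p$ pairwise distinct colors, whence the count equals $a_p$. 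The point to stress is that this value depends only on $\ab$ and not on the chosen coloring.

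With this in hand both directions are short. For the forward direction, I would assume $F \in I_G$ and set $a_p := |\{\ell : p \in S_{i_\ell}\}| = |\{\ell : p \in S_{j_\ell}\}|$; this is exactly the integer vector attached to each of the two monomials in the proof of Lemma~\ref{monomial_color}, so that lemma supplies $k$-colorings $f$ and $g$ of the \emph{single} graph $G_\ab$ realizing $\varphi_k(f) = x_{i_1}\cdots x_{i_k}$ and $\varphi_k(g) = x_{j_1}\cdots x_{j_k}$. For the converse, the crucial feature is that $f$ and $g$ are colorings of the same $G_\ab$: applying the key step to each gives $|\{\ell : p \in S_{i_\ell}\}| = a_p = |\{\ell : p \in S_{j_\ell}\}|$ for all $p$, which is the multiset criterion for $F \in I_G$. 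I expect the only place needing care to be the insistence that the two colorings be forced over a common $\ab$; this is precisely what makes the clique-counts on the two sides coincide, and it is where the ``same $G_\ab$'' hypothesis does its work.

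Finally, the generating-set statement follows \emph{in particular}. As $I_G$ is a toric ideal it is generated by homogeneous binomials, each of the form $x_{i_1}\cdots x_{i_k} - x_{j_1}\cdots x_{j_k}$ of some degree $k$; by the equivalence just proved each such binomial equals $\varphi_k(f) - \varphi_k(g)$ for $k$-colorings $f,g$ of a suitable $G_\ab$, and conversely every such difference lies in $I_G$. It then remains only to observe that the constraint $k \ge \chi(G_\ab)$ discards nothing, since the mere existence of a $k$-coloring of $G_\ab$ already forces $k \ge \chi(G_\ab)$ by the definition of the chromatic number. Hence the two ideals coincide.
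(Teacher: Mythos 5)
Your proof is correct and follows the same route the paper intends: the paper gives no separate proof of Theorem~\ref{binomial_color}, deriving it directly from the multiset membership criterion for $I_G$ and the construction in Lemma~\ref{monomial_color}, and your key observation that $|\{\ell : p \in S_{i_\ell}\}|$ equals $a_p$ (the number of distinct colors on the clique $G^{(p)}$) is exactly the point that makes this derivation work. The handling of the ``in particular'' clause, including the remark that $k \ge \chi(G_\ab)$ is automatic, is also fine.
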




Now, we can prove Theorem~\ref{quadgene}.

\begin{proof}[Proof of Theorem~\ref{quadgene}]
(only if) 
The proof is by contradiction.
Suppose that $K[G]$ is quadratic.
Let $f$ and $g$ be two $k$-colorings of $G_\ab$ with $\ab  \in \ZZ_{\ge 0}^n$
which are not Kempe equivalent.
From Theorem~\ref{binomial_color}, the binomial
$$F := 
\varphi_k (f) - \varphi_k(g) =x_{i_1} \cdots x_{i_k} - x_{j_1} \cdots x_{j_k}
$$
belongs to $I_G$.
By the hypothesis, $F$ is generated by quadratic binomials in $I_G$.
Then the theory of binomial ideals \cite[Lemma 3.8]{HHO} guarantees that
there exists an expression
\begin{equation}
F = \sum_{r=1}^s {\bf x}^{\wb_r} (x_{p_r} x_{q_r} - x_{p_r'} x_{q_r'}),
\label{tenkai}
\end{equation}
where each $x_{p_r} x_{q_r} - x_{p_r'} x_{q_r'}$ belongs to $I_G$.
Suppose that $s$ is minimal among all the sums and all pairs of $k$-colorings of $G_\ab$
which are not Kempe equivalent.
Since $x_{i_1} \cdots x_{i_k}$ must appear in the right-hand side of (\ref{tenkai}),
we may assume that $x_{i_1} \cdots x_{i_k} = {\bf x}^{\wb_1} x_{p_1} x_{q_1}$.
Then there exist $\ell$ and $\ell'$ such that $x_{i_\ell} x_{i_{\ell'}}=x_{p_1} x_{q_1}$, say, $ x_{i_1} x_{i_2} = x_{p_1} x_{q_1}$.
Note that $\varphi_2(f_1) = x_{i_1} x_{i_2}$,
where $f_1$ is the restriction of $f$
to $G' = G_\ab[f^{-1} (1) \cup f^{-1} (2)]$.
Moreover since $x_{p_1} x_{q_1} - x_{p_1'} x_{q_1'}
=
x_{i_1} x_{i_2} - x_{p_1'} x_{q_1'}
$ belongs to $I_G$,
$\varphi_2(f_2) = x_{p_1'} x_{q_1'}$ for some 2-coloring $f_2$ of the graph $G'$.
%
Since $G'$ has a 2-coloring, it is a bipartite graph.
Then $f_1$ and $f_2$ are Kempe equivalent
(See, e.g., \cite[Proposition~2.1 (a)]{Moh}).
Let $f'$ be a coloring of $G_\ab$ defined by
$$
f'(x) =
\begin{cases}
    f_2(x) &  f(x) \in \{1,2\},\\
         f(x) &\mbox{otherwise.}
\end{cases}
$$
It then follows that $f$ and $f'$ are Kempe equivalent.
Moreover we have
$$
x_{i_1} \cdots x_{i_k} - 
{\bf x}^{\wb_1} (x_{p_1} x_{q_1} - x_{p_1'} x_{q_1'})
= x_{p_1'}  x_{p_2'} x_{i_3} \cdots x_{i_k} =\varphi_k (f').
$$
Since
$$
\varphi_k (f') - \varphi_k (g)
=
x_{p_1'}  x_{p_2'} x_{i_3} \cdots x_{i_k}
-x_{j_1} \cdots x_{j_k}
= \sum_{r=2}^s {\bf x}^{\wb_r} (x_{p_r} x_{q_r} - x_{p_r'} x_{q_r'}),
$$
$f'$ and $g$ are Kempe equivalent by the hypothesis on $s$.
Since $f$ and $f'$ are Kempe equivalent,
this contradicts that $f$ and $g$ are not Kempe equivalent.

(if) 
 Suppose that for every $\ab \in \ZZ_{\ge 0}^n$,
    all $k$-colorings of $G_\ab$ are Kempe equivalent.
Let $F= x_{i_1} \cdots x_{i_k} - x_{j_1} \cdots x_{j_k} \in I_G$ be a homogeneous binomial of degree $k \ge 3$.
Suppose that $F$ is not generated by quadratic binomials in $I_G$.

From Theorem~\ref{binomial_color},
there exist $k$-colorings $f$ and $g$ of $G_\ab$
such that $\varphi_k (f) = x_{i_1} \cdots x_{i_k}$
and $\varphi_k (g) = x_{j_1} \cdots x_{j_k}$,
where $\ab=(a_1,\ldots,a_n) \in \ZZ_{\ge 0}^n$ is given by
$a_p = |\{ \ell : p \in S_{i_\ell}\}|\  (= |\{ \ell : p \in S_{j_\ell}\}|)$.
By the hypothesis, $f$ and $g$ are Kempe equivalent.
Let $f_0,f_1,\ldots,f_s$ be a sequence of $k$-colorings of $G_\ab$ such that $f_0=f$, $f_s=g$,
and $f_i$ is obtained from $f_{i-1}$ by a Kempe switching.
Suppose that $s$ is minimal among all binomials which are not generated by quadratic binomials in $I_G$.

Suppose that the Kempe switching from $f$ to $f_1$ is obtained by
a connected component $H$ of the induced subgraph 
$G_\ab [f^{-1}(\mu) \cup f^{-1}(\eta)]$ by setting
$$
f_1(x) =
\begin{cases}
    f(x) & x \notin H,\\
    \mu & x \in H \mbox{ and } f(x) =\eta,\\
    \eta & x \in H \mbox{ and } f(x) =\mu.\\
\end{cases}
$$
Let $f|_{G'}$ and $f_1|_{G'}$ be the restrictions of  $f$ and $f_1$ to $G':= G_\ab [f^{-1}(\mu) \cup f^{-1}(\eta)]$, respectively.
Since $f|_{G'}$ and $f_1|_{G'}$ are 2-colorings of ${G'}$, 
there exists a quadratic binomial $x_{i_\mu} x_{i_\eta} - x_{\sigma} x_{\tau} \in I_G$ where
$S_\sigma = \{j \in [n] : G^{(j)} \cap f_1^{-1}(\mu) \ne \emptyset\}$ and $S_\tau = \{j \in [n] : G^{(j)} \cap f_1^{-1}(\eta) \ne \emptyset\}$.
Then 
$$
F = \frac{x_{i_1} \cdots x_{i_k}}{x_{i_\mu} x_{i_\eta}} (x_{i_\mu} x_{i_\eta} - x_{\sigma} x_{\tau} )
+ F',
$$ 
where 
$$F'=  \frac{x_{i_1} \cdots x_{i_k}}{x_{i_\mu} x_{i_\eta}}
x_{\sigma} x_{\tau} - x_{j_1} \cdots x_{j_k}
\in I_G.$$
Moreover $F' = \varphi_k (f_1) - \varphi_k(g)$ for
the $k$-colorings $f_1$ and $g$ of $G_\ab$.
By the hypothesis on $s$, $F'$ is generated by quadratic binomials $F_1, \ldots, F_t $ in $I_G$.
Hence $F$ is generated by quadratic binomials $x_{i_\mu} x_{i_\eta} - x_{\sigma} x_{\tau}, F_1, \ldots, F_t $ in $I_G$, a contradiction.
\end{proof}

By using Theorem~\ref{quadgene}, we give an example of even-contractile graph $G$ such that $K[G]$ is not quadratic.
\begin{Example}
Let $G$ be the following graph:
	\begin{center}
		\begin{tikzpicture}
\node (w) at (-2.5,0.5) {$G:$}; 
\node[draw, shape=circle] (a) at (0,1.5) {$1$}; 
\node[draw, shape=circle] (b) at (-1.5,-1) {$2$}; 
\node[draw, shape=circle] (c) at (1.5,-1) {$3$}; 
\node[draw, shape=circle] (d) at (0,0.5) {$4$}; 
\node[draw, shape=circle] (e) at (-0.5,-0.5) {$5$}; 
\node[draw, shape=circle] (f) at (0.5,-0.5) {$6$}; 
\node[draw, shape=circle] (g) at (1.5,0.5) {$7$};

\draw (a)--(b);
\draw (b)--(c);
\draw (c)--(a);
\draw (d)--(e);
\draw (e)--(f);
\draw (f)--(d);
\draw (a)--(d);
\draw (b)--(e);
\draw (c)--(f);

\draw (g)--(a);
\draw (g)--(c);
\draw (g)--(d);
\draw (g)--(e);
\draw (g)--(f);
\end{tikzpicture}
	\end{center}
 Then $G$ is even-contractile. Indeed, in the following graphs, each pair of the black vertices is an even pair and each $G_i$ is obtained from $G_{i-1}$ by contracting the even pair of $G_{i-1}$. 
\begin{center}
		\begin{tikzpicture}
  \node (w) at (-2.5,0.5) {$G_0:$}; 
\node[draw, shape=circle] (a) at (0,1.5) {};
\node[draw, shape=circle,fill] (b) at (-1.5,-1){};
\node[draw, shape=circle] (c) at (1.5,-1){};
\node[draw, shape=circle] (d) at (0,0.5){};
\node[draw, shape=circle] (e) at (-0.5,-0.5){};
\node[draw, shape=circle] (f) at (0.5,-0.5){};
\node[draw, shape=circle,fill] (g) at (1.5,0.5){};

\draw (a)--(b);
\draw (b)--(c);
\draw (c)--(a);
\draw (d)--(e);
\draw (e)--(f);
\draw (f)--(d);
\draw (a)--(d);
\draw (b)--(e);
\draw (c)--(f);

\draw (g)--(a);
\draw (g)--(c);
\draw (g)--(d);
\draw (g)--(e);
\draw (g)--(f);
\node (w1) at (3.5,0.5) {$G_1:$}; 
\node[draw, shape=circle] (a1) at (6.5,1.5){};
\node[draw, shape=circle] (b1) at (5,-1){};
\node[draw, shape=circle,fill] (c1) at (8,-1){};
\node[draw, shape=circle,fill] (d1) at (6.5,0.5){};
\node[draw, shape=circle] (e1) at (6,-0.5){};
\node[draw, shape=circle] (f1) at (7,-0.5){};

\draw (a1)--(b1);
\draw (b1)--(c1);
\draw (c1)--(a1);
\draw (d1)--(e1);
\draw (e1)--(f1);
\draw (f1)--(d1);
\draw (a1)--(d1);
\draw (b1)--(e1);
\draw (c1)--(f1);

\draw (b1)--(d1);
\draw (b1)--(f1);
\end{tikzpicture}
\end{center}
\begin{center}
		\begin{tikzpicture}
   \node (w) at (-2.5,0.5) {$G_2:$}; 
\node[draw, shape=circle,fill] (a) at (0,1.5) {};
\node[draw, shape=circle] (b) at (-1.5,-1){};
\node[draw, shape=circle] (d) at (0,0.5){};
\node[draw, shape=circle,fill] (e) at (-0.5,-0.5){};
\node[draw, shape=circle] (f) at (0.5,-0.5){};

\draw (a)--(b);
\draw (d)--(e);
\draw (e)--(f);
\draw (f)--(d);
\draw (a)--(d);
\draw (b)--(e);
\draw (b)--(d);
\draw (b)--(f);

 \node (w) at (3.5,0.5) {$G_3:$}; 
\node[draw, shape=circle] (b1) at (5,-1){};
\node[draw, shape=circle] (d1) at (6.5,0.5){};
\node[draw, shape=circle] (e1) at (6,-0.5){};
\node[draw, shape=circle] (f1) at (7,-0.5){};

\draw (d1)--(e1);
\draw (e1)--(f1);
\draw (f1)--(d1);
\draw (b1)--(e1);

\draw (b1)--(d1);
\draw (b1)--(f1);
\end{tikzpicture}
\end{center}

\noindent
 It then follows that for every $k \geq \chi(G) =4$, all $k$-colorings of $G$ are Kempe equivalent (see Proposition~\ref{ecKEMPE}).
 On the other hand, for $\ab=(1,1,1,1,1,1,0)$, $G_\ab$ is an odd prism which has 3-colorings $f$ and $g$ defined by
 \[
f(i)=\begin{cases}
     1 & \mbox{if $i \in \{1,5\}$} \\
       2 & \mbox{if $i \in \{2,6\}$} \\
         3 & \mbox{if $i \in \{3,4\}$} 
\end{cases}
\mbox{ and }
g(i)=\begin{cases}
     1 & \mbox{if $i \in \{1,6\}$} \\
       2 & \mbox{if $i \in \{2,4\}$} \\
         3 & \mbox{if $i \in \{3,5\}$}. 
\end{cases}
 \]
Clearly, $f$ and $g$ are not Kempe equivalent (as observed in \cite{vdH}). 
Hence from Theorem \ref{quadgene}, $K[G]$ is not quadratic.
In fact, setting 
\[S_1=\{1,5\},\ S_2=\{2,6\}, \ S_3=\{3,4\},\ S_4=\{1,6\}, \ S_5=\{2,4\},\ S_6=\{3,5\}, \]
each $S_i$ is a stable set of $G$ and $F=x_1x_2x_3-x_4x_5x_6$ is a binomial in $I_G$ which is not generated by quadratic binomials in $I_G$.
\end{Example}

\begin{Remark}
It is known that $G$ is perfect if and only if 
the initial ideal of $I_G$ is generated by squarefree monomials
with respect to any reverse lexicographic order \cite{OHcompressed}.
In such a case, $I_G$ is generated by the binomials of degree $\le n+1$
from \cite[Proposition~13.15 (ii)]{Stu}.
Hence, if $G$ is perfect, then we can replace ``$k \ge \chi(H)$"
by ``$\chi(H) \le k \le n+1$" in Theorem~\ref{quadgene}.
\end{Remark}

\section{Applications to perfectly contractile graphs}
\label{sect:app}

In this section, 
we consider applications of Theorem~\ref{quadgene} and prove Theorems~\ref{thm:conj} and \ref{thm:app}.
The following result is a motivation for Bertschi to define perfectly contractile graphs.

\begin{Proposition}[\cite{Bert}]
\label{ecKEMPE}
Let $G$ be an even-contractile graph.
Then $\chi(G)=\omega(G)$ and for every $k \ge \chi(G)$,
all $k$-colorings are Kempe equivalent.
\end{Proposition}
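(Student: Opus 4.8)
The plan is to induct on the number $r$ of contractions in a sequence $G = G_0, G_1, \ldots, G_r$ witnessing that $G$ is even-contractile, where $G_r$ is complete. For the base case $G = G_r = K_t$ both assertions are immediate: $\chi = \omega = t$, and for $k \ge t$ every $k$-coloring of $K_t$ assigns $t$ distinct colors, so a Kempe switch on two colors either transposes the two vertices carrying them or moves a single vertex to an unused color; with these moves one passes between any two injective colorings, so all $k$-colorings are Kempe equivalent. For the inductive step, write $G'=G_1$ for the graph obtained from $G$ by contracting an even pair $\{x,y\}$ to a vertex $z$; then $G'$ is even-contractile via a shorter sequence, and the induction hypothesis gives $\chi(G')=\omega(G')$ and Kempe equivalence of all $k$-colorings of $G'$ for $k\ge\chi(G')$.

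The first tool I need is that contracting an even pair preserves both invariants. For the chromatic number, lifting a coloring of $G'$ (color $x$ and $y$ by the color of $z$) gives $\chi(G)\le\chi(G')$, since every neighbor of $x$ or $y$ in $G$ is a neighbor of $z$ in $G'$. For the reverse inequality I take an optimal coloring of $G$ and make $x,y$ monochromatic by a single Kempe switch, after which it descends to $G'$. The even-pair hypothesis is exactly what makes this switch available: if $x,y$ carried distinct colors $a\ne b$ and lay in the same component of the subgraph induced on colors $\{a,b\}$, a shortest $x$--$y$ path there would be an induced path of odd length, contradicting that every induced $x$--$y$ path is even. The same even-pair idea controls $\omega$: a maximum clique of $G'$ not containing $z$ is already a clique of $G$, while for a clique $\{z\}\cup C_0$ the absence of an induced path $x\text{--}c_x\text{--}c_y\text{--}y$ of length $3$ forces $C_0\subseteq N_G(x)$ or $C_0\subseteq N_G(y)$, so $\{x\}\cup C_0$ (resp.\ $\{y\}\cup C_0$) is a clique of $G$ of the same size. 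Hence $\omega(G)\ge\omega(G')$, and combined with $\chi(G)=\chi(G')=\omega(G')\le\omega(G)\le\chi(G)$ we get $\chi(G)=\omega(G)$.

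For the Kempe-equivalence statement, fix $k\ge\chi(G)=\chi(G')$ and two $k$-colorings $f,f'$ of $G$. By the switch above, each of them is Kempe equivalent in $G$ to a coloring in which $x$ and $y$ share a color, and such colorings descend to $k$-colorings of $G'$; by induction these two descended colorings are Kempe equivalent in $G'$. It remains to lift a Kempe sequence in $G'$ to one in $G$. I claim a single Kempe switch in $G'$, on colors $\{i,j\}$ and a component $H$, lifts to one or two Kempe switches in $G$ between the correspondingly lifted colorings. When $z\notin H$ the vertices of $H$ are not adjacent to $x$ or $y$ in $G$, so $H$ reappears verbatim in $G$ and the same switch applies. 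When $z\in H$, since a vertex $w\ne x,y$ is adjacent to $z$ in $G'$ iff it is adjacent to $x$ or $y$ in $G$, the set $H\setminus\{z\}$ is exactly the set of old vertices reachable from $\{x,y\}$ by $\{i,j\}$-colored walks in $G$; either $x,y$ lie in one component of the $\{i,j\}$-subgraph of $G$, in which case one switch on that component reproduces the lift, or they split into the component of $x$ and that of $y$, in which case switching both (these components are disjoint, with no $\{i,j\}$-edge between them, so the switches are valid and independent) achieves the same effect. Chaining these lifts shows the reductions of $f$ and $f'$ are Kempe equivalent in $G$, hence so are $f$ and $f'$.

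The step I expect to be the main obstacle is precisely this lifting lemma in the case $z\in H$: one must verify that $H\setminus\{z\}$ matches the $\{i,j\}$-connected closure of $\{x,y\}$ in $G$, and that in the ``two-component'' subcase the second switch remains a legitimate Kempe switch after the first (which it does, as there are no $\{i,j\}$-edges between the two components). This is routine but essential bookkeeping, and getting the correspondence between the single split vertex $z$ and the pair $\{x,y\}$ exactly right is where the care is required.
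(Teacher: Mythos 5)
The paper does not prove this proposition; it is quoted from Bertschi's paper \cite{Bert}, so there is no in-paper argument to compare against. Your proof is a correct, self-contained reconstruction along the standard lines: induction on the length of the contraction sequence, with the even-pair hypothesis doing exactly the two jobs it is designed for (an induced path in the bichromatic subgraph alternates colors, so an $x$--$y$ path there would be odd and hence cannot exist, which makes $x$ and $y$ monochromatic after one Kempe switch and gives $\chi(G)=\chi(G')$; and the absence of an induced $P_4$ of the form $x c_2 c_1 y$ forces a clique through $z$ to lift to a clique through $x$ or through $y$, giving $\omega(G)\ge\omega(G')$). The lifting of a single Kempe switch of $G'$ to one or two switches of $G$, including the two-component subcase, is handled correctly --- the key point, which you implicitly use, is that a Kempe switch on colors $\{i,j\}$ does not change the $\{i,j\}$-subgraph as a graph, so the second component remains a legitimate Kempe component after the first switch. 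One sentence is slightly loose: in the case $z\notin H$ you assert that no vertex of $H$ is adjacent to $x$ or $y$ in $G$; this is only guaranteed when $z$ itself carries one of the colors $i,j$ (otherwise a vertex of $H$ may well be adjacent to $x$ or $y$, but this is harmless since $x,y$ then lie outside the $\{i,j\}$-subgraph and $H$ is still a component of it). This does not affect the validity of the argument.
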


By using this proposition and Theorem~\ref{quadgene}, we have the following.
\begin{Corollary}
\label{Ga contractile}
Let $\mathcal{C}$ be a class of some perfectly contractile graphs.
Suppose that, for any $G \in \mathcal{C}$ and $\ab \in \ZZ_{\ge 0}^n$,
the graph
$G_\ab$ belongs to $\mathcal{C}$.
Then $K[G]$ is quadratic for every $G \in \mathcal{C}$.
\end{Corollary}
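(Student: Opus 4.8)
The plan is to combine the two main ingredients that precede this corollary: Theorem~\ref{quadgene}, which reduces quadraticity of $K[G]$ to Kempe equivalence of all colorings of all graphs of the form $G_{\ab}$, and Proposition~\ref{ecKEMPE}, which guarantees Kempe equivalence of colorings for even-contractile graphs. Since perfectly contractile graphs are in particular even-contractile, the only gap to bridge is that Theorem~\ref{quadgene} quantifies over \emph{replication graphs of induced subgraphs} of $G$, i.e.\ over all graphs $G_{\ab}$ with $\ab \in \ZZ_{\ge 0}^n$, whereas Proposition~\ref{ecKEMPE} applies to a single even-contractile graph. The hypothesis of the corollary---that $\Cc$ is closed under the operation $G \mapsto G_{\ab}$---is precisely what lets us apply the proposition uniformly to every such $G_{\ab}$.

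First I would fix an arbitrary $G \in \Cc$ on vertex set $[n]$ and, to verify the condition in Theorem~\ref{quadgene}, take an arbitrary $\ab \in \ZZ_{\ge 0}^n$ together with an arbitrary $k \ge \chi(G_{\ab})$. By the closure hypothesis, $G_{\ab} \in \Cc$, so in particular $G_{\ab}$ is perfectly contractile and hence even-contractile. Applying Proposition~\ref{ecKEMPE} to $G_{\ab}$ then yields immediately that, for every $k \ge \chi(G_{\ab})$, all $k$-colorings of $G_{\ab}$ are Kempe equivalent. Since $\ab$ and $k$ were arbitrary, the Kempe-equivalence condition of Theorem~\ref{quadgene} holds for $G$, and therefore $K[G]$ is quadratic. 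As $G \in \Cc$ was arbitrary, the conclusion follows for every member of $\Cc$.

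There is a minor bookkeeping point worth flagging, though it presents no real obstacle: Theorem~\ref{quadgene} phrases its condition in terms of replication graphs of \emph{induced subgraphs} of $G$, while the statement of the corollary phrases the closure hypothesis in terms of $G_{\ab}$ for $\ab \in \ZZ_{\ge 0}^n$. These descriptions coincide exactly, as noted in the paragraph preceding Lemma~\ref{lem:color-stable}: when $\ab$ is a $(0,1)$-vector, $G_{\ab}$ is an induced subgraph of $G$; when $\ab$ is positive, $G_{\ab}$ is a replication graph of $G$; and in general $G_{\ab}$ is a replication graph of an induced subgraph of $G$. Thus ranging over all $\ab \in \ZZ_{\ge 0}^n$ covers precisely the family of graphs appearing in Theorem~\ref{quadgene}, so no separate argument is needed to pass between the two formulations.

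The argument is essentially immediate once the two cited results are in hand, so I do not expect a genuine hard step. If anything, the only subtlety is ensuring that the closure hypothesis is invoked at the right level of generality---one must apply it to obtain $G_{\ab} \in \Cc$ for \emph{every} $\ab$, not merely for $G$ itself, since Proposition~\ref{ecKEMPE} must be applied to each $G_{\ab}$ individually rather than to $G$ alone. This is exactly what the hypothesis supplies, and it is the reason the closure condition cannot be dropped: without it, even-contractility of $G$ would tell us nothing about the colorings of its replication graphs, which is where quadraticity of the toric ideal is actually tested.
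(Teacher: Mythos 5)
Your proposal is correct and follows exactly the paper's own argument: use the closure hypothesis to place each $G_{\ab}$ in $\mathcal{C}$, apply Proposition~\ref{ecKEMPE} to conclude all $k$-colorings of $G_{\ab}$ are Kempe equivalent, and then invoke Theorem~\ref{quadgene}. The bookkeeping remark about replication graphs of induced subgraphs versus the graphs $G_{\ab}$ is accurate and consistent with the paper's setup.
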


\begin{proof}
Let $G \in \mathcal{C}$ and $\ab \in \ZZ_{\ge 0}^n$.
Since $G_\ab \in \mathcal{C}$ is perfectly contractile, 
all $k$-colorings of $G_\ab$ are Kempe equivalent by
Proposition~\ref{ecKEMPE}.
From Theorem~\ref{quadgene}, $K[G]$ is quadratic.   
\end{proof}

A pair of adjacent vertices $\{x,y \}$ of $G$ is called an {\em adjacent twin} in $G$
if $N(x) \setminus \{y\} = N(y) \setminus \{x\}$,
where $N(x)$ is the set of all neighbours of $x$ in $G$.
By using Corollary ~\ref{Ga contractile}, we can prove Theorem~\ref{thm:conj}.

\begin{proof}[Proof of Theorem~\ref{thm:conj}]
Suppose that Conjecture~\ref{conj:first} is true, that is,
the class $\mathcal{A}$ of graphs that contain no odd holes, no antiholes, and no odd prisms 
coincides with the set of all perfectly contractile graphs.
It is enough to show 
that the stable set ring $K[G]$ is quadratic
for every $G \in \mathcal{A}$.

Let $G \in \mathcal{A}$.
From Corollary~\ref{Ga contractile},
it is enough to show that, for any $\ab \in \ZZ_{\ge 0}^n$,
we have $G_\ab \in \mathcal{A}$.
Since the class $\mathcal{A}$ is hereditary,
we may assume that $\ab$ is a positive vector.
Note that, if $x,y \in G^{(i)}$ with $x\ne y$, then $\{x,y\}$ is an adjacent twin.
Since odd holes, antiholes, and odd prisms have no adjacent twins,
$G_\ab$ has one of such graphs as an induced subgraph if and only if so does $G$.
Hence we have $G_\ab \in \mathcal{A}$.
Thus 
the stable set ring $K[G]$ is quadratic
for every $G \in \mathcal{A}$.
\end{proof}

We apply Corollary \ref{Ga contractile} to several important subclasses of perfectly contractile graphs to prove Theorem~\ref{thm:app}.
The \textit{dart} is the graph on the vertex set $\{1,2,3,4,5\}$
and the edge set 
$$\{ \{1,2\},\{2,3\},\{1,5\},\{2,5\},\{3,5\},\{4,5\} \}.$$
A graph is called \textit{weakly chordal} if it has no holes and no antiholes.
A graph is called \textit{Mayniel} if any odd cycle of length $\ge 5$
has at least two chords.
Let $G$ be a graph on the vertex set $[n]$.
An ordering $(i_1,\ldots,i_n)$ of $[n]$ is called {\em perfect}
if, for any induced (ordered) subgraph $H$ of $G$,
the number of colors used 
by the greedy coloring algorithm on $H$ is 
$\chi(H)$.
It is known \cite[Theorem~1]{perfectlyorderable} that
a vertex ordering $<$ of a graph $G$ is perfect if and only if $G$ contains no induced $P_4$
$abcd$ such that $a<b$ and $d<c$.
A graph is called {\em perfectly orderable} if there exists a perfect ordering $(v_1,\dots,v_n)$ of 
the vertex set of $G$.

\begin{proof}[Proof of Theorem~\ref{thm:app}]
From Corollary~\ref{Ga contractile},
it is enough to show that, for any $\ab \in \ZZ_{\ge 0}^n$,
if $G$ satisfies one of (a) -- (e), then so does $G_\ab$.
Since all of the classes are hereditary,
we may assume that $\ab$ is positive.

The dart, holes, antiholes, odd/even prisms, and odd cycles of length $\ge 5$ having exactly one chord
are forbidden graphs for the classes (a), (b), (c) and (d),
and have no adjacent twins.
Thus $G_\ab$ has one of such graphs as an induced subgraph if and only if so does $G$.   

Let $G$ be a perfectly orderable graph with a perfect ordering $(i_1,\ldots,i_n)$.
Note that an induced $P_4$ has no adjacent twins.
Hence any vertex ordering of $G_\ab$ such that $x < y$ whenever
$x \in G^{(i)}$, $y \in G^{(j)}$ with $i < j$ is a perfect ordering of $G_\ab$.
Thus $G_\ab$ is perfectly orderable.
\end{proof}

\subsection*{Acknowledgment}
The first author was partially supported by JSPS KAKENHI 24K00534 and the second author was partially supported by JSPS KAKENHI 22K13890 and 26K00618.

\end{document}